\definecolor{greytext}{gray}{0.5}
\renewcommand\UrlFont{\color{black!80}}
\DeclareUrlCommand\DOI{}
\newcommand\theoremlevel{section}
\definecolor{greytext}{gray}{0.5}
    \newtheorem{theorem}{Theorem}%
    \newtheorem{theorem}{Theorem}[\theoremlevel]%
\newtheorem{lemma}[theorem]{Lemma}
\newtheorem{proposition}[theorem]{Proposition}
\theoremstyle{definition}
\newtheorem{definition}[theorem]{Definition}
\newtheorem{example}[theorem]{Example}
\theoremstyle{remark}
\newtheorem{remark}[theorem]{Remark}
\let\oldcite\cite
\renewcommand\cite[2][]{{\rm\oldcite[#1]{#2}}}
\numberwithin{equation}{section}
\def\(#1\){$\displaystyle#1$}
\def\[#1\]{\begin{align*}#1\end{align*}}
\newcommand\sC{\mathscr{C}}
\newcommand\sD{\mathscr{D}}
\newcommand\sK{\mathscr{K}}
\newcommand\sS{\mathscr{S}}
\newcommand\sV{\mathscr{V}}
\newcommand\Hom{\operatorname{Hom}}
\newcommand\varlim{\varprojlim}
\newcommand\varcolim{\varinjlim}
\newcommand{\pto}{}
\newcommand{\pgets}{}
\DeclareRobustCommand{\pto}{\mathrel{\mathpalette\p@to@gets\to}}
\DeclareRobustCommand{\pgets}{\mathrel{\mathpalette\p@to@gets\gets}}
\newcommand{\p@to@gets}[2]{%
  \ooalign{\hidewidth$\m@th#1\mapstochar\mkern5mu$\hidewidth\cr$\m@th#1\to$\cr}%
}
\newcommand\Cat{\mathbf{Cat}}
\newcommand\Grpd{\mathbf{Grpd}}
\newcommand\Pres{\mathbf{Pres}}
\newcommand\SymMon{\mathbf{SymMon}}
\tikzset{Rightarrow/.style={double equal sign distance,>={Implies},->},
Rrightarrow/.style={-,preaction={draw,Rightarrow}},
Rrrightarrow/.style={preaction={draw,Rightarrow,shorten >=0pt},shorten >=1pt,-,double,double
distance=0.2pt}}
\newcommand{\manref}[2]{\hyperref[#1]{#2~\ref*{#1}}}
\newcommand{\rank}{\operatorname{rank}}
\title{A note on Noetherian $(\infty, \infty)$-categories}
\date{\monthname~\the\year}
\author{Zach Goldthorpe}
\begin{document}
\maketitle
\begin{abstract}
    The purpose of this note is to resolve \cite[Conjecture 3.4.3]{goldthorpe:fixed}, regarding the initial algebra for the enrichment endofunctor $(-)\Cat$ over general symmetric monoidal $(\infty, 1)$-categories.
    We prove that Ad\'amek's construction of an initial algebra for $(-)\Cat$ does not terminate; more precisely, we show that Ad\'amek's construction of an initial algebra for the endofunctor $(-)\Cat^{<\lambda}$ that sends a symmetric monoidal $(\infty, 1)$-category $\sV$ to the $(\infty, 1)$-category of $\sV$-enriched categories with at most $\lambda$ equivalence classes of objects terminates in precisely $\lambda$ steps.
    We also prove that an initial algebra for the endofunctor $(-)\Cat$ exists nonetheless, and characterise it as the $(\infty, 1)$-category consisting of those $(\infty, \infty)$-categories that satisfy a weak finiteness property we call Noetherian.
\end{abstract}

\tableofcontents{}

\section{Preliminaries}

Throughout this note, an ``$\infty$-category'' refers to an $(\infty, 1)$-category.

The theory of enrichment in a general monoidal $\infty$-category was developed extensively in \cite{gepner-haugseng}.
If $\SymMon_\infty$ denotes the $\infty$-category of large symmetric monoidal $\infty$-categories, then enrichment induces an endofunctor $(-)\Cat : \SymMon_\infty \to \SymMon_\infty$.
If $\SymMon_\infty^\Pres$ denotes the $\infty$-category of presentably symmetric monoidal $\infty$-categories (and symmetric monoidal left adjoints between them), then enrichment restricts further to an endofunctor on $\SymMon_\infty^\Pres$ as well.

We can then use this endofunctor to construct an $\infty$-category of $(\infty, r)$-categories for all finite $r\geq0$ by induction:
\[
    \Cat_{(\infty, 0)} &:= \Grpd_\infty := \sS, & \Cat_{(\infty, r+1)} &:= (\Cat_{(\infty, r)})\Cat
\]
In the resulting tower of inclusions
$$
\Cat_{(\infty, 0)} \subseteq \Cat_{(\infty, 1)} \subseteq \Cat_{(\infty, 2)} \subseteq\cdots
$$
every inclusion $\Cat_{(\infty, r)}\subseteq\Cat_{(\infty, r+1)}$ admits a left adjoint $\pi_{\leq r}$ and a right adjoint $\kappa_{\leq r}$.
These tower of adjoints define two natural constructions of the $\infty$-category of $(\infty, \infty)$-categories:
\[
    \Cat_{(\infty, \infty)} &:= \varlim\left(\dots \to \Cat_{(\infty, 2)} \xrightarrow{\kappa_{\leq1}} \Cat_{(\infty, 1)} \xrightarrow{\kappa_{\leq0}} \Cat_{(\infty, 0)}\right) \\
    \Cat_\omega &:= \varlim\left(\dots \to \Cat_{(\infty, 2)} \xrightarrow{\pi_{\leq1}} \Cat_{(\infty, 1)} \xrightarrow{\pi_{\leq0}} \Cat_{(\infty, 0)}\right)
\]
where $\Cat_{(\infty, \infty)}$ consists of higher categories wherein equivalences are generated inductively, and $\Cat_\omega$ is the full subcategory of $\Cat_{(\infty, \infty)}$ spanned by those $(\infty, \infty)$-categories wherein the equivalences are weakly coinductive; see \cite[Remark 3.0.5]{goldthorpe:fixed}.

The main result of \cite{goldthorpe:fixed} is that both $\Cat_\omega$ and $\Cat_{(\infty, \infty)}$ define universal fixed points with respect to enrichment.
More precisely, in the $\infty$-category of pairs $(\sV, \tau)$, where $\sV$ is a presentably symmetric monoidal $\infty$-category, and $\tau : \sV \xrightarrow\sim \sV\Cat$ is a symmetric monoidal equivalence, and the morphisms between them being symmetric monoidal left adjoints that preserve $\tau$, then $\Cat_\omega$ defines a terminal object, and $\Cat_{(\infty, \infty)}$ defines an initial object.

If we consider the larger category of pairs $(\sV, \tau)$ where $\sV$ is only symmetric monoidal and not necessarily presentably so, and the morphisms between them are not necessarily left adjoints, then $\Cat_\omega$ continues to define a terminal object.
However, the presentability assumption is crucial for $\Cat_{(\infty, \infty)}$.

The universal properties are derived using an $\infty$-categorical extension of the theory of endofunctor algebras developed in \cite[\S2]{goldthorpe:fixed}; see \cite{adamek:modern} for the classical theory.
For an $\infty$-category $\sK$ with an endofunctor $F:\sK\to\sK$, an \emph{$F$-algebra} is a pair $(A, \mu)$ where $A\in\sK$ and $\mu : FA \to A$ is an arbitrary morphism.
Dually, an \emph{$F$-coalgebra} is a pair $(C, \nu)$ where $C\in\sK$ and $\nu:C\to FC$.
Lambek's lemma states that an initial $F$-algebra is also an initial object in the category of fixed points of $F$, so the main result of \cite{goldthorpe:fixed} is proved by showing that $\Cat_{(\infty, \infty)}$ is an initial algebra for $(-)\Cat$ over presentably symmetric monoidal $\infty$-categories, and dually that $\Cat_\omega$ is a terminal coalgebra for $(-)\Cat$ over (presentably) symmetric monoidal $\infty$-categories.

Ad\'amek's construction describes a transfinite algorithm for building an initial $F$-algebra.
Specifically, if $\emptyset$ is an initial object of $\sK$, then construct the transfinite sequence
$$
\emptyset \xrightarrow! F\emptyset \xrightarrow{F(!)} F^2\emptyset \xrightarrow{F^2(!)} F^3\emptyset \to \cdots
$$
If, for some limit ordinal $\lambda$, the colimit $F^\lambda\emptyset = \varcolim_{\theta<\lambda} F^\theta\emptyset$ exists, and the canonical map $j:F^\lambda\emptyset\to F(F^\lambda\emptyset)$ is an equivalence, then $(F^\lambda\emptyset, j^{-1})$ defines an initial $F$-algebra; see \cite[Corollary 2.2.9]{goldthorpe:fixed}.
We say that Ad\'amek's construction \emph{terminates in $\lambda$ steps} in this case.

Let $\Cat_{(n, r)}$ denote the full subcategory of $\Cat_{(\infty, \infty)}$ spanned by those $\sC$ such that
\begin{itemize}
    \item parallel $k$-morphisms are equivalent for $k > n$, and
    \item all $k$-morphisms are invertible for $k > r$.
\end{itemize}
See \cite[Definition 3.0.1]{goldthorpe:fixed} or \cite[Proposition 6.1.7, Theorem 6.1.8]{gepner-haugseng} for a more precise characterisation.
We also take $\Cat_{(-1, 0)} \simeq \{\varnothing\to*\}$ and $\Cat_{(-2, 0)} \simeq \{*\}$.

The terminal object in $\SymMon_\infty$ and $\SymMon_\infty^\Pres$ is given by the singleton $\Cat_{(-2, 0)} := \{*\}$, and $\Cat_\omega$ can be realised as the limit
$$
\Cat_\omega \simeq \varlim\left(\cdots\to\Cat_{1, 3}\xrightarrow\pi \Cat_{(0, 2)} \xrightarrow\pi \Cat_{(-1, 1)} \xrightarrow\pi \Cat_{(-2, 0)}\right)
$$
which can be computed in both $\SymMon_\infty$ and $\SymMon_\infty^\Pres$.
This is precisely Ad\'amek's construction of a terminal coalgebra, proving the universal property of $\Cat_\omega$ as a terminal fixed point of enrichment in both $\SymMon_\infty$ and $\SymMon_\infty^\Pres$.

On the other hand, the initial object in $\SymMon_\infty^\Pres$ is $\Cat_{(\infty, 0)}$, and $\Cat_{(\infty, \infty)}$ can be realised as the colimit
$$
\Cat_{(\infty, \infty)} \simeq {\varcolim}^L\left(\Cat_{(\infty, 0)} \subseteq \Cat_{(\infty, 1)} \subseteq\Cat_{(\infty, 2)}\subseteq\cdots\right)
$$
in $\SymMon_\infty^\Pres$.
As this is an instance of Ad\'amek's construction of an initial algebra, this proves the universal property of $\Cat_{(\infty, \infty)}$.
However, the argument does not extend to $\SymMon_\infty$.
As shown in \cite[Proposition 3.4.1]{goldthorpe:fixed}, the first $\omega$ steps of Ad\'amek's construction for an initial algebra in $\SymMon_\infty$ yields
$$
\Cat_{<\omega} := \varcolim\left(\Cat_{(-2, 0)} \subseteq \Cat_{(-1, 1)} \subseteq \Cat_{(0, 2)} \subseteq \Cat_{(1, 3)} \subseteq \cdots\right) = \bigcup_{\substack{0\leq n<\infty \\ r \geq 0}}\Cat_{(n, r)}
$$
which is the $\infty$-category of finite-dimensional higher categories, which is not a fixed point of enrichment.

\cite[Conjecture 3.4.3]{goldthorpe:fixed} predicts that Ad\'amek's initial algebra construction never terminates, but that an initial algebra exists nonetheless.
We prove in \cref{prop:enr-init-dnt} that Ad\'amek's initial algebra construction does indeed fail to terminate.
The conjecture also characterised the initial algebra as the $\infty$-category of \emph{Noetherian} $(\infty, \infty)$-categories.
However, the definition of Noetherian in the conjecture is too weak.
We provide a modified notion of when an $(\infty, \infty)$-category is Noetherian, and prove in \cref{thm:enr-init-bigterminate} that the initial algebra for $(-)\Cat$ is precisely the full subcategory of $\Cat_{(\infty, \infty)}$ spanned by the Noetherian $(\infty, \infty)$-categories.

\section{Measuring finiteness}

To understand Ad\'amek's initial algebra construction for $(-)\Cat$ over $\SymMon_\infty$, we introduce the following measure of finiteness:
\begin{definition}\label{def:cat-rank}
    We define the \emph{rank} of an $(\infty, \infty)$-category $\sC$ by transfinite induction.
    \begin{itemize}
        \item Say that $\rank\sC < 0$ if and only if $\sC\simeq*$.
        \item For an ordinal $\theta$, say that $\rank\sC < \theta + 1$ if $\rank\Hom_\sC(x, y) < \theta$ for all $x, y\in\sC$.
        \item For a limit ordinal $\lambda$, say that $\rank\sC < \lambda$ if $\rank\sC < \theta$ for some $\theta < \lambda$.
    \end{itemize}
    Say $\rank\sC = \theta$ if $\rank\sC < \theta+1$ but $\rank\sC\not<\theta$.
    Note that the rank of $\sC$ is invariant under equivalence.

    For an ordinal $\theta$, let $\Cat_{<\theta}$ denote the full subcategory of $\Cat_{(\infty, \infty)}$ spanned by those $\sC$ with $\rank\sC < \theta$.
\end{definition}
\begin{remark}\label{rem:cat-rank}
    By \cref{lem:cat-rank} below, if $\rank\sC<\theta$ and $\theta<\theta'$, then also $\rank\sC<\theta'$.
\end{remark}
\begin{example}\label{ex:cat-rank}
    For the finite ordinal $n$, $\Cat_{<n}$ consists of the $(n-2, n)$-categories.
    
    The $\infty$-category $\Cat_{<\omega}$ consists of the finite-dimensional higher categories, and $\Cat_{<\omega+1}$ consists of the locally finite-dimensional higher categories.
\end{example}

\begin{lemma}\label{lem:cat-rank}
    The $\infty$-categories $\Cat_{<\theta}$ can be constructed through transfinite induction via enrichment:
    \begin{itemize}
        \item $\Cat_{<0} \simeq \Grpd_{-2} \simeq \{*\}$,
        \item $\Cat_{<\theta + 1} \simeq (\Cat_{<\theta})\Cat$; in particular, $\Cat_{<\theta}$ is a full subcategory of $\Cat_{<\theta+1}$,
        \item For a limit ordinal $\lambda$,
            $$
            \Cat_{<\lambda} \simeq \varcolim_{\theta<\lambda}\Cat_{<\theta}
            $$
    \end{itemize}
\end{lemma}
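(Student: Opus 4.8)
The proof runs by transfinite induction on $\theta$, with the three clauses treated at the base, successor, and limit stages. For $\theta = 0$, \cref{def:cat-rank} identifies $\Cat_{<0}$ with the full subcategory of $\Cat_{(\infty,\infty)}$ on those $\sC$ with $\sC\simeq*$; since $*$ is terminal, $\Map_{\Cat_{(\infty,\infty)}}(*,*)\simeq*$, so $\Cat_{<0}\simeq\{*\}=\Grpd_{-2}$ with its essentially unique (trivial) symmetric monoidal structure. It is convenient to settle \cref{rem:cat-rank} first: that $\Cat_{<\theta}$ is a full subcategory of $\Cat_{<\theta'}$ whenever $\theta\le\theta'$ follows by a short transfinite induction directly from \cref{def:cat-rank}, the crux being that $\rank\sC<\gamma$ implies $\rank\sC<\gamma+1$, which reduces to the same implication for the hom-objects of $\sC$ and bottoms out at $\Hom_*(*,*)\simeq*$. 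This lets me treat the $\Cat_{<\theta}$ as a filtered system of full subcategories of $\Cat_{(\infty,\infty)}$.

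The substance is the successor stage, which I would handle through the fixed-point equivalence $\Cat_{(\infty,\infty)}\simeq(\Cat_{(\infty,\infty)})\Cat$ of \cite{goldthorpe:fixed}, under which an $(\infty,\infty)$-category $\sC$ corresponds to the $\Cat_{(\infty,\infty)}$-enriched category with the same space of objects and with hom-objects $\Hom_\sC(x,y)$. By \cref{def:cat-rank}, $\rank\sC<\theta+1$ precisely when every $\Hom_\sC(x,y)$ has rank $<\theta$, which by the inductive hypothesis means precisely that every hom-object of the enriched category attached to $\sC$ lies in $\Cat_{<\theta}$. So the task reduces to identifying $(\Cat_{<\theta})\Cat$ with the full subcategory of $(\Cat_{(\infty,\infty)})\Cat$ spanned by those enriched categories all of whose hom-objects lie in $\Cat_{<\theta}$. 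The genuinely new ingredient is the statement $(\star)$: for a fully faithful symmetric monoidal inclusion $\sV\hookrightarrow\sW$ --- meaning a full subcategory that contains the unit and is closed under $\otimes$ --- the induced functor $\sV\Cat\to\sW\Cat$ is fully faithful, with essential image exactly the $\sW$-enriched categories all of whose hom-objects lie in $\sV$. Given $(\star)$, I would also carry along, as part of the induction, the auxiliary claim that $\Cat_{<\theta}$ is a full symmetric monoidal subcategory of $\Cat_{(\infty,\infty)}$ (so that $(\Cat_{<\theta})\Cat$ is defined): this is trivial at $0$; at a successor it holds because $(-)\Cat$ sends the symmetric monoidal inclusion $\Cat_{<\theta}\hookrightarrow\Cat_{(\infty,\infty)}$ to a fully faithful symmetric monoidal functor by $(\star)$, and such a functor presents its source as a full symmetric monoidal subcategory of its target (here $(\Cat_{(\infty,\infty)})\Cat\simeq\Cat_{(\infty,\infty)}$); at a limit it holds because a filtered union of full symmetric monoidal subcategories is again one. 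Applying $(\star)$ to $\Cat_{<\theta}\hookrightarrow\Cat_{(\infty,\infty)}$ and composing with the fixed-point equivalence then yields $\Cat_{<\theta+1}\simeq(\Cat_{<\theta})\Cat$, and the inclusion $\Cat_{<\theta}\subseteq\Cat_{<\theta+1}$ is the monotonicity already recorded.

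For a limit ordinal $\lambda$, \cref{def:cat-rank} says directly that $\Cat_{<\lambda}$ is the full subcategory of $\Cat_{(\infty,\infty)}$ on $\bigcup_{\theta<\lambda}\Ob\Cat_{<\theta}$. By the inductive hypothesis and monotonicity, the $\Cat_{<\theta}$ for $\theta<\lambda$ form a filtered diagram of full symmetric monoidal subcategories of $\Cat_{(\infty,\infty)}$ with fully faithful transition functors; the colimit of such a diagram in $\Cat_\infty$ is exactly this ``union'' full subcategory, and since filtered colimits in $\SymMon_\infty$ are created by the forgetful functor to $\Cat_\infty$, the same object computes $\varcolim_{\theta<\lambda}\Cat_{<\theta}$ in $\SymMon_\infty$. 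Hence $\Cat_{<\lambda}\simeq\varcolim_{\theta<\lambda}\Cat_{<\theta}$, closing the induction.

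The step I expect to be the real obstacle is $(\star)$: reconciling the fibrational, Segal-style model of enrichment of \cite{gepner-haugseng} with the informal ``objects, hom-objects, composition, units'' picture used above, so as to pin down both the full faithfulness and the essential image of $\sV\Cat\to\sW\Cat$. I anticipate this is either a direct consequence of the change-of-base results of \cite{gepner-haugseng} or a short verification from their construction of $\sV\Cat$; everything else --- the elementary properties of rank, the fixed-point equivalence of \cite{goldthorpe:fixed}, and the compatibility of filtered colimits with symmetric monoidal structure --- is standard or already available.
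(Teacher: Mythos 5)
Your proposal is correct and follows essentially the same route as the paper, which simply declares the base and successor cases to hold ``by definition'' (i.e.\ your claim $(\star)$ together with the identification $\Cat_{(\infty,\infty)}\simeq(\Cat_{(\infty,\infty)})\Cat$) and handles the limit case as the same filtered union of full subcategories; your version just makes explicit the change-of-base/essential-image statement and the monoidal-closure bookkeeping that the paper leaves implicit. The only organizational difference is that you prove the monotonicity of \cref{rem:cat-rank} directly from \cref{def:cat-rank} up front, whereas the paper folds it into the transfinite induction hypothesis, and this is immaterial.
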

\begin{proof}
    That $\Cat_{<0}\simeq\{*\}$ and $\Cat_{<\theta+1} \simeq (\Cat_{<\theta})\Cat$ follow by definition.
    For the limit case, suppose by transfinite induction that $\Cat_{<\theta}\subseteq\Cat_{<\theta'}$ for all $\theta<\theta'<\lambda$.
    Then,
    \[
        \varcolim_{\theta<\lambda}\Cat_{<\theta} \simeq \bigcup_{\theta<\lambda}\Cat_{<\theta} = \Cat_{<\lambda}
    \]
    as desired.
\end{proof}
\begin{lemma}\label{lem:cat-rank-distinct}
    For every ordinal $\theta$, there is an $(\infty, \infty)$-category $\sC$ such that $\rank\sC = \theta$.
\end{lemma}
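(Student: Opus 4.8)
The plan is to construct, by transfinite recursion on $\theta$, an explicit $(\infty,\infty)$-category $\sR_\theta$ with $\rank\sR_\theta = \theta$, and to verify the rank by a parallel transfinite induction. I would take $\sR_0 := \varnothing$, the initial $(\infty,\infty)$-category: its rank is $0$, since the condition $\rank\varnothing < 1$ holds vacuously while $\rank\varnothing \not< 0$ because $\varnothing \not\simeq {*}$. Given $\sR_\theta$, I would let $\sR_{\theta+1}$ be the collage (cograph) of $\sR_\theta$ regarded as a profunctor from $*$ to $*$ --- the $(\infty,\infty)$-category with two objects $0,1$ whose hom-objects are $\Hom(0,0) = \Hom(1,1) := {*}$, $\Hom(1,0) := \varnothing$, and $\Hom(0,1) := \sR_\theta$, equipped with its (forced) structure maps, in which every composite either targets the terminal object $*$ or is a unitor. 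Since its hom-objects all have rank $< \theta+1$, this is a $\Cat_{<\theta+1}$-enriched category, hence an object of $(\Cat_{<\theta+1})\Cat = \Cat_{<\theta+2} \subseteq \Cat_{(\infty,\infty)}$ by \cref{lem:cat-rank}. For a limit ordinal $\lambda$, I would set $\sR_\lambda := \coprod_{\theta<\lambda}\sR_\theta$, realised as the $\Cat_{<\lambda}$-enriched category with object-space $\coprod_{\theta<\lambda}\Ob\sR_\theta$ and with hom-objects those of the individual summands together with $\varnothing$ between objects of distinct summands; as each such hom-object has rank $< \lambda$ (here one uses that $\lambda$ is a limit), this lies in $(\Cat_{<\lambda})\Cat = \Cat_{<\lambda+1} \subseteq \Cat_{(\infty,\infty)}$.

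It then remains to compute ranks, by transfinite induction, using that $\rank$ is equivalence-invariant and monotone (\cref{rem:cat-rank}) and that $\rank\varnothing = 0$.
\begin{itemize}
    \item \emph{Base.} $\rank\sR_0 = \rank\varnothing = 0$.
    \item \emph{Successor.} Assume $\rank\sR_\theta = \theta$. Every hom-object of $\sR_{\theta+1}$ has rank $< \theta+1$, so $\rank\sR_{\theta+1} < \theta+2$; and if $\rank\sR_{\theta+1} < \theta+1$ held, then in particular $\rank\Hom(0,1) = \rank\sR_\theta < \theta$, contradicting the hypothesis. Hence $\rank\sR_{\theta+1} = \theta+1$.
    \item \emph{Limit.} Assume $\rank\sR_\theta = \theta$ for all $\theta < \lambda$. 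Every hom-object of $\sR_\lambda$ has rank $< \lambda$, so $\rank\sR_\lambda < \lambda+1$. If instead $\rank\sR_\lambda < \lambda$, pick $\alpha$ with $1 \leq \alpha < \lambda$ and $\rank\sR_\lambda < \alpha$. A one-line induction on $\alpha$ (valid for $\alpha \geq 1$) shows that a fully faithful functor $\sD \hookrightarrow \sC$ satisfies $\rank\sC < \alpha \Rightarrow \rank\sD < \alpha$; applied to the fully faithful inclusion of the summand $\sR_\alpha \hookrightarrow \sR_\lambda$, this yields $\rank\sR_\alpha < \alpha$, contradicting $\rank\sR_\alpha = \alpha$. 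Hence $\rank\sR_\lambda = \lambda$.
\end{itemize}

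The only real delicacy is the bookkeeping around the initial object $\varnothing$, whose rank is $0$ rather than ``$-\infty$'', so the estimates above are tight: $\rank\sR_{\theta+1} < \theta+2$ relies on $\rank\varnothing = 0 < \theta+1$ (true because $\theta \geq 0$), and $\rank\sR_\lambda < \lambda+1$ relies on $\rank\varnothing = 0 < \lambda$ (true because $\lambda$ is a limit ordinal) --- so one must check that the empty hom-objects introduced by the collage and the coproduct never spoil the bounds. Everything else is routine: the fully-faithful monotonicity claim is an immediate transfinite induction, and the verification that the collage and the coproduct genuinely define $\sV$-enriched categories is straightforward once one observes that all their structure maps and coherences are forced --- targets are terminal, or maps are unitors --- using that $\Cat_{<\theta}$ is cartesian closed, so that $\varnothing\otimes(-)$ is constant at $\varnothing$.
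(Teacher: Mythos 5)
Your construction is essentially the paper's own proof: the base case is $\varnothing$, your two-object collage is exactly the suspension $\Sigma\sR_\theta$ (in the sense of \cite[Definition 4.3.21]{gepner-haugseng}) that the paper uses at successor stages, and the limit stage is the same coproduct $\coprod_{\theta<\lambda}\sR_\theta$. The only difference is that you spell out the full-subcategory monotonicity of rank (with the harmless $\alpha\geq 1$ caveat) that the paper invokes implicitly, so the proposal is correct and matches the paper's argument.
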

\begin{proof}
    We prove this by transfinite induction.
    For $\theta = 0$, we take $\sC = \varnothing$.
    Indeed, $\rank\sC < 1$ is vacuous, and $\rank\sC\not<0$ because $\sC\not\simeq*$.
    
    Suppose we have an $(\infty, \infty)$-category $\sD$ such that $\rank\sD = \theta$.
    Then, $\rank\sC = \theta + 1$, where $\sC := \Sigma\sD$ is the $(\infty, \infty)$-category with two objects $\bot, \top$ and $\Hom_\sC(\bot, \top) = \sD$; see \cite[Definition 4.3.21]{gepner-haugseng}.
    
    Finally, suppose $\lambda$ is a limit ordinal such that for every $\theta < \lambda$, there exists an $(\infty, \infty)$-category $\sD^\theta$ such that $\rank\sD^\theta = \theta$.
    Then, take $\sC := \coprod_{\theta<\lambda}\sD^\theta$.
    
    Let $x, y\in\sC$.
    If $x\in\sD^\theta$ and $y\in\sD^{\theta'}$ with $\theta\neq\theta'$, then $\rank\Hom_\sC(x, y) = \rank\varnothing = 0 < \lambda$.
    Otherwise, $\rank\Hom_\sC(x, y) = \rank\Hom_{\sD^\theta}(x, y) < \lambda$.
    In particular, $\rank\sC < \lambda + 1$.
    On the other hand, $\rank\sC\not<\theta$ for all $\theta < \lambda$ since $\sD^\theta$ is a (full) subcategory of $\sC$, and $\rank\sD^\theta\not<\theta$.
    Therefore, $\rank\sC \not< \lambda$, proving that $\rank\sC = \lambda$, as desired.
\end{proof}

With the above groundwork, we already have a proof of \cite[Conjecture 3.4.3(2)]{goldthorpe:fixed}.

\begin{proposition}\label{prop:enr-init-dnt}
    Ad\'amek's construction of an initial algebra for $(-)\Cat$ over the category $\SymMon_\infty$ does not terminate.
\end{proposition}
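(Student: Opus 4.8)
The plan is to identify Adámek's transfinite sequence for the endofunctor $F := (-)\Cat$ over $\SymMon_\infty$ with the tower of full subcategories $\Cat_{<\theta}\subseteq\Cat_{(\infty,\infty)}$ of \cref{def:cat-rank}, and then to observe that this tower grows strictly at every stage, so that the fixed-point test in the definition of termination fails at every limit ordinal.

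First I would recall that the initial object of $\SymMon_\infty$ is the trivial symmetric monoidal $\infty$-category $\{*\}$ --- it is the unit of $(\Cat_\infty,\times)$, equivalently the initial commutative algebra object --- and that $\{*\}\simeq\Cat_{<0}$ by \cref{lem:cat-rank}. Hence Adámek's sequence starts at $F^0\emptyset\simeq\Cat_{<0}$, and a routine transfinite induction using the successor clause $\Cat_{<\theta+1}\simeq(\Cat_{<\theta})\Cat$ and the limit clause $\Cat_{<\lambda}\simeq\varcolim_{\theta<\lambda}\Cat_{<\theta}$ of \cref{lem:cat-rank} shows that $F^\theta\emptyset\simeq\Cat_{<\theta}$ for every ordinal $\theta$; in particular the colimits required by the construction exist in $\SymMon_\infty$. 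The same induction identifies the connecting morphisms of the sequence, and therefore the canonical comparison map $j:F^\lambda\emptyset\to F(F^\lambda\emptyset)\simeq F^{\lambda+1}\emptyset$ at a limit ordinal $\lambda$, with the full-subcategory inclusions $\Cat_{<\lambda}\hookrightarrow\Cat_{<\lambda+1}$ supplied by \cref{lem:cat-rank}.

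Now fix a limit ordinal $\lambda$. By \cref{lem:cat-rank-distinct} there is an $(\infty,\infty)$-category $\sC$ with $\rank\sC=\lambda$, so by definition $\rank\sC<\lambda+1$ but $\rank\sC\not<\lambda$; thus $\sC\in\Cat_{<\lambda+1}$, while, rank being invariant under equivalence, $\sC$ is equivalent to no object of $\Cat_{<\lambda}$. Consequently the inclusion $\Cat_{<\lambda}\hookrightarrow\Cat_{<\lambda+1}$ --- that is, the canonical map $j$ --- is not essentially surjective, hence not an equivalence. Since $\lambda$ was an arbitrary limit ordinal and termination is, by definition, something that can only happen at a limit stage, this shows that Adámek's construction of an initial algebra for $(-)\Cat$ over $\SymMon_\infty$ does not terminate. (In fact, repeating the argument with a rank-$\theta$ category shows that every single connecting map $\Cat_{<\theta}\hookrightarrow\Cat_{<\theta+1}$ is proper, so there is no hidden stabilisation at a successor stage either.)

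The argument is short and I do not anticipate a real obstacle; the one point deserving care is the bookkeeping that pins the canonical map $j$ down as the full-subcategory inclusion of \cref{lem:cat-rank} --- rather than merely concluding that $F^\lambda\emptyset$ and $F^{\lambda+1}\emptyset$ are abstractly inequivalent --- together with the verification that the limit-stage colimits are formed in $\SymMon_\infty$ itself; both are already encoded in the statement and proof of \cref{lem:cat-rank}.
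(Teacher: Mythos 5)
Your proposal is correct and follows essentially the same route as the paper: identify the $\theta$th stage of Ad\'amek's construction with $\Cat_{<\theta}$ via \cref{lem:cat-rank}, then use \cref{lem:cat-rank-distinct} to see that each inclusion $\Cat_{<\theta}\hookrightarrow\Cat_{<\theta+1}$ (in particular the canonical map $j$ at every limit stage) fails to be essentially surjective. The paper's proof is just a terser version of this; your extra care in pinning down $j$ as the subcategory inclusion is a reasonable elaboration, not a different argument.
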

\begin{proof}
    The $\theta$th stage of Ad\'amek's construction yields $\Cat_{<\theta}$ by \cref{lem:cat-rank}.
    Therefore, the proposition follows from \cref{lem:cat-rank-distinct}.
\end{proof}

\section{Large convergence}

The failure of Ad\'amek's construction to terminate in \cref{prop:enr-init-dnt} is purely a size issue.
For instance, let $(-)\Cat^{<\omega}$ denote the subfunctor of $(-)\Cat$ that sends $\sV$ to the full subcategory $\sV\Cat^{<\omega}$ of $\sV\Cat$ spanned by those $\sV$-enriched categories with finitely many equivalence classes of objects (that is, the underlying space of objects has finitely many path-connected components).
Then, Ad\'amek's construction for $(-)\Cat^{<\omega}$ terminates after $\omega$ steps, and the initial algebra consists of those finite-dimensional higher categories with finitely many equivalence classes of $k$-morphisms for each $k\geq0$.

\begin{lemma}\label{lem:lambda-small-noeth}
    Fix a regular cardinal $\lambda$.
    Let $\sC$ be an $(\infty, \infty)$-category such that the set of equivalence classes of objects of $\sC$ is $\lambda$-small, and $\rank\Hom_\sC(x, y) < \lambda$ for all $x, y\in\sC$.
    Then, $\rank\sC < \lambda$. 
\end{lemma}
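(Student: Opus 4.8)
The plan is to produce a single ordinal $\theta < \lambda$ that simultaneously bounds $\rank\Hom_\sC(x,y)$ for all $x,y\in\sC$; the conclusion $\rank\sC<\lambda$ will then follow immediately from the successor clause of \cref{def:cat-rank} together with the monotonicity recorded in \cref{rem:cat-rank}.

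First I would note that $\rank\Hom_\sC(x,y)$ depends only on the equivalence classes of $x$ and $y$ in $\sC$: equivalent objects have equivalent hom-$(\infty,\infty)$-categories, and rank is an equivalence invariant by \cref{def:cat-rank}. Consequently the collection $S := \{\,\rank\Hom_\sC(x,y) : x,y\in\sC\,\}$ is the image of a function defined on the set of pairs of equivalence classes of objects of $\sC$. Since that set of equivalence classes is $\lambda$-small and $\lambda$ is a regular cardinal, the set of such pairs is again $\lambda$-small, so $S$ is a $\lambda$-small set of ordinals, each strictly below $\lambda$ by hypothesis.

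By regularity of $\lambda$, the supremum $\theta := \sup S$ satisfies $\theta < \lambda$. Then $\rank\Hom_\sC(x,y) \le \theta < \theta + 1$ for every $x,y\in\sC$, so the successor clause of \cref{def:cat-rank} gives $\rank\sC < \theta + 2$. Finally $\lambda$ is a limit ordinal, so $\theta < \lambda$ forces $\theta + 2 < \lambda$, and \cref{rem:cat-rank} upgrades $\rank\sC < \theta + 2$ to $\rank\sC < \lambda$, as desired.

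I do not expect a genuine obstacle here: the argument is just the defining property of a regular cardinal applied to the $\lambda$-small family of hom-ranks. The only point requiring care is the reduction to equivalence classes of objects — this is what prevents $S$ from being an unbounded (possibly proper) class of ordinals and thereby makes the regularity argument applicable, and it is precisely where the $\lambda$-smallness hypothesis on the objects of $\sC$ enters.
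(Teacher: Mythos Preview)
Your argument is correct and follows essentially the same route as the paper: reduce to equivalence classes, take a supremum of the $\lambda$-small family of hom-ranks, and invoke regularity to keep the bound below $\lambda$. The only cosmetic difference is that you work with the actual ranks (yielding $\rank\sC<\theta+2$) while the paper chooses witnesses $\theta_{x,y}$ strictly above each hom-rank (yielding $\rank\sC<\theta+1$); both are equivalent once one observes that $\lambda$ is a limit ordinal.
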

\begin{proof}
    For $x, y\in\sC$, let $\theta_{x,y} < \lambda$ such that $\rank\Hom_\sC(x, y) < \theta_{x, y}$; such an ordinal exists because a regular cardinal is necessarily a limit ordinal.
    Then, let $\theta := \sup_{x,y\in\sC}\theta_{x, y}$.
    Note that if $x\simeq x'$ and $y\simeq y'$, then $\theta_{x,y} = \theta_{x',y'}$.
    Since $\sC$ has fewer than $\lambda$ objects up to equivalence, it follows from the fact that $\lambda$ is a regular cardinal that $\theta < \lambda$, and therefore also that $\theta + 1 < \lambda$.
    Therefore, $\rank\sC < \theta + 1 < \lambda$, as desired.
\end{proof}

The above lemma enables us to prove \cite[Conjecture 3.4.3(1)]{goldthorpe:fixed}.

\begin{proposition}\label{prop:enr-init-smallterminate}
    For a regular cardinal $\lambda$, let $(-)\Cat^{<\lambda} : \SymMon_\infty \to \SymMon_\infty$ denote the subfunctor of $(-)\Cat$ that associates to a symmetric monoidal category $\sV$ the full subcategory $\sV\Cat^{<\lambda}$ of $\sV\Cat$ spanned by those $\sV$-enriched categories such that the set of path-connected components of its underlying space of objects is $\lambda$-small.
    Then, Ad\'amek's construction of an initial algebra for $(-)\Cat^{<\lambda}$ over $\SymMon_\infty$ terminates after no fewer than $\lambda$ steps.
\end{proposition}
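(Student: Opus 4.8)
I would mirror \cref{lem:cat-rank} and \cref{lem:cat-rank-distinct}, carrying along the extra bookkeeping forced by truncating at $\lambda$. Write $F := (-)\Cat^{<\lambda}$, and recall from the proof of \cref{prop:enr-init-dnt} that the $\theta$-th stage of Ad\'amek's construction for $(-)\Cat$ over $\SymMon_\infty$ is $\Cat_{<\theta}$, with base the initial object $\Cat_{<0}\simeq\{*\}$. The first step is to record the analog of \cref{lem:cat-rank}: define $\Cat_{<\theta}^{<\lambda}$ by transfinite recursion via $\Cat_{<0}^{<\lambda}:=\{*\}$, $\Cat_{<\theta+1}^{<\lambda}:=(\Cat_{<\theta}^{<\lambda})\Cat^{<\lambda}$, and $\Cat_{<\mu}^{<\lambda}:=\varcolim_{\theta<\mu}\Cat_{<\theta}^{<\lambda}$ for limit ordinals $\mu$. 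Exactly as in \cref{lem:cat-rank}, each $\Cat_{<\theta}^{<\lambda}\subseteq\Cat_{<\theta+1}^{<\lambda}$ is a full subcategory inclusion --- apply $F$, which preserves full subcategory inclusions just as $(-)\Cat$ does, to $\{*\}\subseteq\{*\}\Cat^{<\lambda}$ --- so the limit stages are the corresponding unions and $\Cat_{<\theta}^{<\lambda}=F^\theta\emptyset$. Since $(-)\Cat^{<\lambda}$ is a subfunctor of $(-)\Cat$, the same induction shows $\Cat_{<\theta}^{<\lambda}$ is a full subcategory of $\Cat_{<\theta}$; hence every object of $\Cat_{<\theta}^{<\lambda}$ has rank $<\theta$.

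The heart of the argument is an adaptation of \cref{lem:cat-rank-distinct}: for every $\theta<\lambda$ I would produce an $(\infty,\infty)$-category $\sD^\theta$ with $\rank\sD^\theta=\theta$, with a $\lambda$-small set of equivalence classes of objects, and --- crucially --- lying in the stage $\Cat_{<\theta+1}^{<\lambda}$. Take $\sD^0:=\varnothing$, $\sD^{\theta+1}:=\Sigma\sD^\theta$ (the suspension, as in the proof of \cref{lem:cat-rank-distinct}), and $\sD^\mu:=\coprod_{\theta<\mu}\sD^\theta$ for limit $\mu<\lambda$; the rank computations are verbatim those of \cref{lem:cat-rank-distinct}. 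For membership in the stages: $\varnothing$ is the empty $\{*\}$-enriched category, so $\varnothing\in\{*\}\Cat^{<\lambda}=\Cat_{<1}^{<\lambda}$; the category $\Sigma\sD^\theta$ has two objects and hom-objects among $\{\sD^\theta, *, \varnothing\}\subseteq\Cat_{<\theta+1}^{<\lambda}$, so $\Sigma\sD^\theta\in(\Cat_{<\theta+1}^{<\lambda})\Cat^{<\lambda}=\Cat_{<\theta+2}^{<\lambda}$; and every hom-object of $\sD^\mu$ is either $\varnothing$ or a hom-object of some $\sD^\theta$ with $\theta<\mu$, hence an object of $\Cat_{<\theta}^{<\lambda}\subseteq\Cat_{<\mu}^{<\lambda}$, so that $\sD^\mu\in(\Cat_{<\mu}^{<\lambda})\Cat^{<\lambda}=\Cat_{<\mu+1}^{<\lambda}$ \emph{provided} $\sD^\mu$ has a $\lambda$-small set of equivalence classes of objects. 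This proviso is exactly where the hypothesis on $\lambda$ is used: the equivalence classes of objects of $\sD^\mu$ form the $\mu$-indexed disjoint union of those of the $\sD^\theta$, which is $\lambda$-small because each summand is, $\mu<\lambda$, and $\lambda$ is regular.

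To conclude: every object of $\Cat_{<\theta}^{<\lambda}$ has rank $<\theta$, whereas $\sD^\theta\in\Cat_{<\theta+1}^{<\lambda}$ has rank exactly $\theta$, so the full subcategory inclusion $\Cat_{<\theta}^{<\lambda}\subseteq\Cat_{<\theta+1}^{<\lambda}$ is proper for every $\theta<\lambda$; equivalently, the structure map $F^\theta\emptyset\to F^{\theta+1}\emptyset$ of Ad\'amek's sequence is never an equivalence for $\theta<\lambda$. In particular, for no limit ordinal $\mu<\lambda$ is the canonical map $F^\mu\emptyset\to F(F^\mu\emptyset)$ an equivalence: under the identifications above it is the full subcategory inclusion $\Cat_{<\mu}^{<\lambda}\subseteq\Cat_{<\mu+1}^{<\lambda}$, which is not essentially surjective since $\sD^\mu$ lies in the target but, having rank $\mu$, is equivalent to no object of the source (rank being an equivalence invariant). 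Hence Ad\'amek's construction for $F$ does not terminate before step $\lambda$, i.e.\ it terminates after no fewer than $\lambda$ steps.

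The only genuinely delicate point is the limit step in the construction of the $\sD^\theta$: keeping $\sD^\mu$ inside the Ad\'amek stage $\Cat_{<\mu+1}^{<\lambda}$ while it accumulates objects from all earlier stages. This is precisely where regularity of $\lambda$ is indispensable, and it is the feature that distinguishes $(-)\Cat^{<\lambda}$ from the unrestricted $(-)\Cat$, for which \cref{prop:enr-init-dnt} shows that no analogous bound can hold. The remaining point --- that the limit stages are computed in $\SymMon_\infty$ as unions of full subcategories --- is routine and identical to the corresponding step in \cref{lem:cat-rank}.
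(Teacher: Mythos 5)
Your lower-bound argument is essentially the paper's own: the paper defines the stages $\Cat_{<\theta}^{<\lambda}$ by exactly the recursion you give, and then simply asserts that, ``following the proof of \cref{lem:cat-rank-distinct}'', each inclusion $\Cat_{<\theta}^{<\lambda}\subseteq\Cat_{<\theta+1}^{<\lambda}$ remains proper for $\theta<\lambda$. You have supplied the bookkeeping this assertion needs, and you correctly locate the role of regularity of $\lambda$ at the limit stages of the $\sD^\theta$ construction (keeping $\coprod_{\theta<\mu}\sD^\theta$ with a $\lambda$-small set of equivalence classes of objects). That part is fine and is the same route as the paper.

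The substantive difference is that you stop at non-termination below $\lambda$. The paper's proof continues: using \cref{lem:lambda-small-noeth} it shows that $\Cat_{<\lambda}^{<\lambda}\subseteq\Cat_{<\theta}^{<\lambda}$ is an equivalence for all $\theta>\lambda$, so the construction does terminate, in exactly $\lambda$ steps, with $\Cat_{<\lambda}^{<\lambda}$ the initial algebra for $(-)\Cat^{<\lambda}$ --- this stronger form is what the abstract claims and what the surrounding results rely on. If the proposition is read as asserting that the construction terminates (and merely bounding from below how long it takes), your proposal has a gap: you never verify that the canonical map $F^\lambda\emptyset\to F(F^\lambda\emptyset)$ is an equivalence. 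The missing step is a single application of \cref{lem:lambda-small-noeth}: any $\sC\in\Cat_{<\lambda+1}^{<\lambda}\simeq(\Cat_{<\lambda}^{<\lambda})\Cat^{<\lambda}$ has a $\lambda$-small set of equivalence classes of objects and satisfies $\rank\Hom_\sC(x,y)<\lambda$ for all $x,y$, hence $\rank\sC<\lambda$ and $\sC\in\Cat_{<\lambda}^{<\lambda}$. Adding that sentence would make your argument match the paper's proof in full.
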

\begin{proof}
    For an ordinal $\theta$, let $\Cat_{<\theta}^{<\lambda}$ denote the full subcategory of $\Cat_{<\theta}$ on those $(\infty, \infty)$-categories $\sC$ such that the set of equivalence classes of $k$-morphisms is $\lambda$-small for every $k\geq0$.
    Then, $\Cat_{<\theta}^{<\lambda}$ can be constructed by transfinite induction, analogous to \cref{lem:cat-rank}:
    \begin{itemize}
        \item $\Cat_{<0}^{<\lambda} \simeq \Grpd_{-2} \simeq \{*\}$, which is the initial object in $\SymMon_\infty$,
        \item $\Cat_{<\theta + 1}^{<\lambda} \simeq (\Cat_{<\theta}^{<\lambda})\Cat^{<\lambda}$,
        \item For a limit ordinal $\mu$,
            $$
            \Cat_{<\mu}^{<\lambda} \simeq \varcolim_{\theta<\mu}\Cat_{<\theta}^{<\lambda}
            $$
    \end{itemize}
    Following the proof of \cref{lem:cat-rank-distinct}, there still exists $\sC\in\Cat_{<\theta+1}^{<\lambda}$ such that $\sC\notin\Cat_{<\theta}^{<\lambda}$, so long as $\theta < \lambda$.
    However, \cref{lem:lambda-small-noeth} shows that $\Cat_{<\lambda}^{<\lambda}\subseteq\Cat_{<\theta}^{<\lambda}$ is an equivalence for all $\theta > \lambda$.
    
    Therefore, Ad\'amek's construction terminates in exactly $\lambda$ steps, as desired, and $\Cat_{<\lambda}^{<\lambda}$ carries the structure of an initial algebra for $(-)\Cat^{<\lambda}$ over $\SymMon_\infty$.
\end{proof}

It remains to prove \cite[Conjecture 3.4.3(3)]{goldthorpe:fixed}, which is concerned with the initial algebra for $(-)\Cat$ in $\SymMon_\infty$.
\begin{definition}\label{def:Noeth}
    A \emph{parallel morphism tower} $(\vec\alpha,\vec\beta)$ in an $(\infty, \infty)$-category $\sC$ is a (countable) sequence of pairs
    $$
    (\alpha_0, \beta_0), (\alpha_1, \beta_1), (\alpha_2, \beta_2), \dots
    $$
    where $\alpha_0,\beta_0$ are objects of $\sC$, and $\alpha_{n+1}$ and $\beta_{n+1}$ are parallel $(n+1)$-morphisms $\alpha_n\to\beta_n$ in $\sC$ for all $n\geq0$.
    
    Say that an $(\infty, \infty)$-category $\sC$ is \emph{Noetherian} if for any parallel morphism tower $(\vec\alpha,\vec\beta)$, there exists $N\gg0$ such that $\Hom_\sC(\alpha_N, \beta_N) \simeq *$.
    
    Denote by $\Cat_{(\infty, \infty)}^{\mathrm{Noeth}}$ the full subcategory of $\Cat_{(\infty, \infty)}$ spanned by the Noetherian $(\infty, \infty)$-categories.
\end{definition}
\begin{remark}\label{rem:Noeth}
    The above definition of Noetherian is stronger than that proposed in \cite[Definition 3.4.2]{goldthorpe:fixed}, which only requires that any parallel morphism tower $(\vec\alpha,\vec\beta)$ admits $N\gg0$ such that $\alpha_n$ and $\beta_n$ are equivalences for all $n\geq N$.
    Indeed, any $\infty$-groupoid satisfies this weaker property, but not every $\infty$-groupoid is Noetherian in the sense of \cref{def:Noeth}.
\end{remark}

\begin{lemma}\label{lem:Noeth}
    For an $(\infty, \infty)$-category $\sC$, the following are equivalent:
    \begin{enumerate}[label={(\roman*)}]
        \item\label{it:Noeth}
            $\sC$ is Noetherian,
        \item\label{it:loc-Noeth}
            $\sC$ is locally Noetherian, in the sense that $\Hom_\sC(x, y)$ is Noetherian for all $x,y\in\sC$,
        \item\label{it:small-rank}
            $\sC$ has small rank, in that $\rank\sC < \theta$ for some ordinal $\theta$,
        \item\label{it:loc-small-rank}
            $\sC$ locally has small rank, in that $\Hom_\sC(x, y)$ has small rank for all $x, y\in\sC$.
    \end{enumerate}
\end{lemma}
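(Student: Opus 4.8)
The plan is to derive the equivalence of \ref{it:Noeth}--\ref{it:loc-small-rank} from the implications $\ref{it:Noeth}\Leftrightarrow\ref{it:loc-Noeth}$, $\ref{it:small-rank}\Leftrightarrow\ref{it:loc-small-rank}$, $\ref{it:small-rank}\Rightarrow\ref{it:Noeth}$ and $\ref{it:Noeth}\Rightarrow\ref{it:small-rank}$, which together force all four to coincide. The bookkeeping device used throughout is the observation that a parallel morphism tower $(\vec\alpha,\vec\beta)$ in $\sC$ with $\alpha_0=x$ and $\beta_0=y$ is precisely the datum of objects $x,y\in\sC$ together with a parallel morphism tower $(\alpha_{n+1},\beta_{n+1})_{n\geq0}$ in $\Hom_\sC(x,y)$, and that for $N\geq1$ the mapping object $\Hom_\sC(\alpha_N,\beta_N)$ of \cref{def:Noeth} agrees with the terminal hom of this shifted tower formed inside $\Hom_\sC(x,y)$, since $\Hom_\sC$ of higher morphisms is by convention the corresponding iterated mapping object. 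As ``$N\gg0$'' allows $N$ to be taken arbitrarily large, this gives $\ref{it:Noeth}\Leftrightarrow\ref{it:loc-Noeth}$ directly: for $\ref{it:Noeth}\Rightarrow\ref{it:loc-Noeth}$, prepend $(x,y)$ to a tower in $\Hom_\sC(x,y)$ and invoke the Noetherian property of $\sC$; for $\ref{it:loc-Noeth}\Rightarrow\ref{it:Noeth}$, pass from a tower in $\sC$ to its shift in $\Hom_\sC(\alpha_0,\beta_0)$. The implication $\ref{it:small-rank}\Rightarrow\ref{it:loc-small-rank}$ is then immediate from \cref{def:cat-rank}, using \cref{rem:cat-rank} if necessary to take the rank bound on $\sC$ to be a successor ordinal.

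For $\ref{it:loc-small-rank}\Rightarrow\ref{it:small-rank}$ the point is to assemble the pointwise bounds $\rank\Hom_\sC(x,y)=\theta_{x,y}$ into a single bound on $\rank\sC$, which is exactly what \cref{lem:lambda-small-noeth} accomplishes once the size of $\sC$ is taken into account. Because rank is an equivalence invariant and an object of $\Cat_{(\infty,\infty)}$ has only a set of equivalence classes of objects, the ordinals $\theta_{x,y}$ form a set; choosing a regular cardinal $\lambda$ exceeding both their supremum and the number of equivalence classes of objects of $\sC$ places us in the situation of \cref{lem:lambda-small-noeth}, which yields $\rank\sC<\lambda$.

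It remains to tie Noetherianness to small rank. For $\ref{it:small-rank}\Rightarrow\ref{it:Noeth}$ I would argue by transfinite induction, showing that every $(\infty,\infty)$-category of rank $<\theta$ is Noetherian: the case $\theta=0$ and the limit cases are immediate, and if $\theta=\theta'+1$ then every $\Hom_\sC(x,y)$ has rank $<\theta'$, hence is Noetherian by the inductive hypothesis, so $\sC$ is Noetherian by the already-proven $\ref{it:loc-Noeth}\Rightarrow\ref{it:Noeth}$. For the converse $\ref{it:Noeth}\Rightarrow\ref{it:small-rank}$ I would prove the contrapositive: if $\rank\sC$ is undefined then, by the implication $\ref{it:loc-small-rank}\Rightarrow\ref{it:small-rank}$ just established, there exist objects $x_0,y_0$ with $\rank\Hom_\sC(x_0,y_0)$ undefined; since $\varnothing$ has rank $0$, such a hom-category is nonempty, so one may iterate (by dependent choice) to build a parallel morphism tower $(\vec\alpha,\vec\beta)$ along which $\Hom_\sC(\alpha_n,\beta_n)$ has undefined rank, and in particular is never equivalent to $*$, for every $n$. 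Thus $\sC$ is not Noetherian, completing the cycle.

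I expect the one step of genuine substance to be $\ref{it:loc-small-rank}\Rightarrow\ref{it:small-rank}$: passing from local to global rank bounds is not a formal consequence of the transfinite recursion defining $\rank$ — it would fail for a coproduct of $(\infty,\infty)$-categories indexed by a proper class of ordinals — and really requires both the essential smallness of $\sC$ and the regularity of the chosen cardinal, i.e.\ \cref{lem:lambda-small-noeth}. The complementary content is the recognition that an $(\infty,\infty)$-category of undefined rank always houses a witnessing parallel morphism tower, which drives $\ref{it:Noeth}\Rightarrow\ref{it:small-rank}$; the remaining implications are the tower bookkeeping above and a routine transfinite induction.
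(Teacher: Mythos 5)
Your proof is correct and takes essentially the same route as the paper: the same decomposition into (i)$\Leftrightarrow$(ii), (iii)$\Leftrightarrow$(iv) via \cref{lem:lambda-small-noeth}, (iii)$\Rightarrow$(i) by transfinite induction on rank, and (i)$\Rightarrow$(iii) by recursively extracting a parallel morphism tower of homs of undefined rank. The only differences are cosmetic — you spell out the tower-shifting behind (i)$\Leftrightarrow$(ii), which the paper dismisses as ``by definition,'' and you are slightly more explicit about choosing the regular cardinal $\lambda$.
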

\begin{proof}
    The equivalence between \ref{it:Noeth} and \ref{it:loc-Noeth} follows by definition.
    
    Note that \ref{it:small-rank} certainly implies \ref{it:loc-small-rank}: if $\rank\sC < \theta$, then $\rank\sC < \theta+1$, and therefore $\rank\Hom_\sC(x, y) < \theta$ for all $x, y\in\sC$.
    Conversely, if for all $x, y\in\sC$ there exists an ordinal $\theta_{x,y}\gg0$ such that $\rank\Hom_\sC(x, y) < \theta_{x, y}$, choose $\lambda\gg0$ such that the set of equivalence classes of objects in $\sC$ is $\lambda$-small, and such that $\lambda \geq\theta_{x, y}$ for all $x, y\in\sC$.
    Then, $\rank\sC < \lambda$ by \cref{lem:lambda-small-noeth}.
    This proves that \ref{it:small-rank} is equivalent to \ref{it:loc-small-rank}.
    
    Since the singleton $*$ is certainly Noetherian, and locally Noetherian $(\infty, \infty)$-categories are Noetherian, it follows by transfinite induction on the rank that every $(\infty, \infty)$-category $\sC$ with small rank is Noetherian.
    This shows that \ref{it:small-rank} implies \ref{it:Noeth}.
    
    To prove the converse, suppose $\sC$ does not have small rank.
    Then, $\sC$ does not locally have small rank, so there must exist $\alpha_0, \beta_0\in\sC$ such that $\Hom_\sC(\alpha_0, \beta_0)$ does not have small rank.
    Proceeding recursively, we obtain a parallel morphism tower $(\vec\alpha,\vec\beta)$ where each $\Hom_\sC(\alpha_n, \beta_n)$ does not have small rank.
    In particular, $\Hom_\sC(\alpha_n, \beta_n)\not\simeq*$ for every $n\geq0$.
    Therefore, if $\sC$ does not have small rank, then $\sC$ is not Noetherian, completing the proof.
\end{proof}

\begin{theorem}\label{thm:enr-init-bigterminate}
    $\Cat_{(\infty, \infty)}^{\mathrm{Noeth}}$ carries the structure of an initial algebra for $(-)\Cat$ over $\SymMon_\infty$.
\end{theorem}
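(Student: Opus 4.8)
The plan is to exhibit $\Cat_{(\infty,\infty)}^{\mathrm{Noeth}}$ as the colimit of Ad\'amek's initial chain for $(-)\Cat$ over $\SymMon_\infty$ run through the proper class of \emph{all} ordinals, and then to adapt the standard Ad\'amek initiality argument to this ``limit at $\infty$'' situation. First I would record the identification
$$
\Cat_{(\infty,\infty)}^{\mathrm{Noeth}} \;\simeq\; \bigcup_{\theta\in\mathrm{Ord}}\Cat_{<\theta} \;\simeq\; \varcolim_{\theta\in\mathrm{Ord}}\Cat_{<\theta},
$$
which is immediate from the equivalence of conditions in \cref{lem:Noeth} (Noetherian $\Leftrightarrow$ small rank) together with \cref{lem:cat-rank}. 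The right-hand side is a union of an increasing chain of symmetric monoidal full subcategories of $\Cat_{(\infty,\infty)}$, and since filtered colimits in $\SymMon_\infty$ are created by the forgetful functor to $\Cat_\infty$, it is again a (large) symmetric monoidal $\infty$-category. By \cref{lem:cat-rank} and \cite[Proposition 3.4.1]{goldthorpe:fixed} the chain $\theta\mapsto\Cat_{<\theta}$, with connecting maps the canonical full inclusions, is precisely Ad\'amek's initial chain for the endofunctor $(-)\Cat$ on $\SymMon_\infty$: it begins at the initial object $\Cat_{<0}\simeq\{*\}$, satisfies $\Cat_{<\theta+1}\simeq(\Cat_{<\theta})\Cat$, and has the expected colimit at every limit stage.

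Next I would put a fixed-point structure on $\Cat_{(\infty,\infty)}^{\mathrm{Noeth}}$. The key point is that $(-)\Cat$ commutes with this particular colimit: an object of $(\Cat_{(\infty,\infty)}^{\mathrm{Noeth}})\Cat$ is a category enriched in $\Cat_{(\infty,\infty)}^{\mathrm{Noeth}}$, i.e.\ an $(\infty,\infty)$-category all of whose hom-objects have small rank (and, as for any enriched category, with a small space of objects), so choosing a regular cardinal $\lambda$ bounding both the number of equivalence classes of objects and all of the hom-ranks, \cref{lem:lambda-small-noeth} gives that the category itself has rank $<\lambda$; conversely every $(\infty,\infty)$-category of rank $<\theta$ is enriched in $\Cat_{<\theta}$. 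Hence the canonical comparison map $\varcolim_\theta (\Cat_{<\theta})\Cat \to \bigl(\varcolim_\theta\Cat_{<\theta}\bigr)\Cat$ is an equivalence --- concretely, it is the union of the defining identifications $\Cat_{<\theta+1}\simeq(\Cat_{<\theta})\Cat$ --- yielding a canonical equivalence $j\colon \Cat_{(\infty,\infty)}^{\mathrm{Noeth}}\xrightarrow{\sim}(\Cat_{(\infty,\infty)}^{\mathrm{Noeth}})\Cat$. Then $(\Cat_{(\infty,\infty)}^{\mathrm{Noeth}}, j^{-1})$ is a $(-)\Cat$-algebra, and by Lambek's lemma it only remains to prove initiality.

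For initiality, let $(\sV,\nu\colon\sV\Cat\to\sV)$ be an arbitrary $(-)\Cat$-algebra in $\SymMon_\infty$. Imitating the proof of \cite[Corollary 2.2.9]{goldthorpe:fixed}, but recursing through every ordinal rather than halting at a fixed limit ordinal, I would construct a cocone $(\phi_\theta\colon\Cat_{<\theta}\to\sV)_{\theta\in\mathrm{Ord}}$ under the initial chain: $\phi_0$ is the unique map out of the initial object, $\phi_{\theta+1}$ is the composite $\nu\circ(\phi_\theta)\Cat$, and $\phi_\lambda$ is induced by $(\phi_\theta)_{\theta<\lambda}$ at limit stages. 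Passing to the colimit gives a map $\phi\colon\Cat_{(\infty,\infty)}^{\mathrm{Noeth}}\to\sV$, which is an algebra map because the relevant square commutes after restriction to each $\Cat_{<\theta+1}\simeq(\Cat_{<\theta})\Cat$ (where it reduces to $\phi_{\theta+1} = \nu\circ(\phi_\theta)\Cat$). For uniqueness, any algebra map $\psi$ restricts on each $\Cat_{<\theta}$ to a map that, by transfinite induction using the algebra square at successor stages and the colimit universal property at limit stages, must coincide with $\phi_\theta$; hence $\psi\simeq\phi$.

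The main obstacle --- and the reason this needs more than a direct citation of \cite[Corollary 2.2.9]{goldthorpe:fixed} --- is that by \cref{prop:enr-init-dnt} and \cref{lem:cat-rank-distinct} the initial chain is \emph{strictly} increasing at every stage, so there is no limit ordinal at which it stabilises; the colimit must genuinely be taken over the proper class $\mathrm{Ord}$. Two observations make this manageable: (i) \cref{lem:lambda-small-noeth} is exactly what guarantees that $(-)\Cat$ preserves the colimit over $\mathrm{Ord}$, so that the ``limit at $\infty$'' really is a fixed point; and (ii) the transfinite recursion of \cite[\S2]{goldthorpe:fixed} can be carried out one universe up, treating $\mathrm{Ord}$ as a small filtered poset there, after which the resulting colimit $\Cat_{(\infty,\infty)}^{\mathrm{Noeth}}$ is still a full subcategory of the large $\infty$-category $\Cat_{(\infty,\infty)}$, so the enlargement leaves the statement unaffected. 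I expect the bulk of the write-up to be spent verifying that the cocone $(\phi_\theta)$ and the uniqueness induction can be assembled coherently over $\mathrm{Ord}$ --- that is, that the machinery of \cite[\S2]{goldthorpe:fixed} applies verbatim in this ``nonterminating but convergent'' regime.
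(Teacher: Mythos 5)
Your proposal is correct and follows essentially the same route as the paper: identify $\Cat_{(\infty,\infty)}^{\mathrm{Noeth}}$ with $\bigcup_\theta\Cat_{<\theta}$ via \cref{lem:Noeth}, recognise this chain as Ad\'amek's construction via \cref{lem:cat-rank}, use \cref{lem:lambda-small-noeth} to see that the construction stabilises at the stage indexed by the class of all ordinals, and handle the size issue by passing to a larger universe. The paper simply expands universes so that the class of ordinals becomes a (large) ordinal $\Lambda$ and then cites \cite[Corollary 2.2.9]{goldthorpe:fixed} directly, rather than re-running the cocone and uniqueness induction by hand as in your main text --- which is exactly the shortcut you flag in your observation (ii).
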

\begin{proof}
    By \cref{lem:Noeth}, the canonical inclusion $\Cat_{(\infty, \infty)}^{\mathrm{Noeth}}\subseteq(\Cat_{(\infty, \infty)}^{\mathrm{Noeth}})\Cat$ is an equivalence.

    By expanding universes, let $\Lambda$ denote the large ordinal of all (small) ordinals.
    Then, \cref{lem:Noeth} implies that $\Cat_{(\infty, \infty)}^{\mathrm{Noeth}}$ is the $\Lambda$-filtered colimit
    $$
    \Cat_{(\infty, \infty)}^{\mathrm{Noeth}} = \bigcup_\theta\Cat_{<\theta} \simeq \varcolim_{\theta<\Lambda}\Cat_{<\theta}
    $$
    in $\SymMon_\infty$, which by \cref{lem:cat-rank} is precisely $\Lambda$ stages of Ad\'amek's initial algebra construction.
    Since the construction terminates after $\Lambda$ steps by the previous discussion, the theorem follows by \cite[Corollary 2.2.9]{goldthorpe:fixed}.
\end{proof}
\begin{remark}\label{rem:enr-init-bigterminate}
    Although Ad\'amek's construction in this case requires a large colimit, this colimit is small relative to an expanded universe, and Ad\'amek's construction applies to $\infty$-categories that are small relative to the universe of discourse.
\end{remark}

\addcontentsline{toc}{section}{References}

\end{document}